\newtheorem{prethm}{{\bf Theorem}}
\newenvironment{thm}{\begin{prethm}\sl{\hspace{-0.5
               em}{\bf.}}}{\end{prethm}}
\newtheorem{prepro}[prethm]{{\bf Proposition}}
\newtheorem{prelem}[prethm]{{\bf Lemma}}
\newenvironment{lem}{\begin{prelem}\sl{\hspace{-0.5
               em}{\bf.}}}{\end{prelem}}
\newtheorem{predeff}[prethm]{{\bf Definition}}
\newtheorem{precor}[prethm]{{\bf Corollary}}
\newtheorem{preconj}[prethm]{{\bf Conjecture}}
\newenvironment{conj}{\begin{preconj}\sl{\hspace{-0.5
               em}{\bf.}}}{\end{preconj}}
\newtheorem{preremark}[prethm]{{\bf Remark}}
\newenvironment{remark}{\begin{preremark}\rm{\hspace{-0.5
               em}{\bf.}}}{\end{preremark}}
\newtheorem{preexample}[prethm]{{\bf Example}}
\newtheorem{preproof}{{\bf\textsf{Proof.}}}
\newenvironment{proof}[1]{\begin{preproof}{\rm
               #1}\hfill{$\Box$}}{\end{preproof}}
\newcommand{\la}{\lambda}
\newcommand{\om}{\omega}
\newcommand{\x}{{\bf x}}
\newcommand{\mul}{{\rm mult}}
\title{Some spectral properties of  chain graphs}
\author{{\sc Ebrahim Ghorbani} \\[.3cm]
{\sl Department of Mathematics, K.N. Toosi University of Technology,}\\
{\sl P. O. Box 16315-1618, Tehran, Iran}\\
{\sl School of Mathematics, Institute for Research in Fundamental
Sciences (IPM),}\\
{\sl P.O. Box
19395-5746, Tehran, Iran }
\\[.3cm]
$\mathsf{e\_ghorbani@ipm.ir}$ }
\begin{document}
\maketitle

\vspace{5mm}

\begin{abstract}
A graph is called a chain graph if it is bipartite and the neighborhoods of the vertices in each color class form a chain with respect to inclusion. Alazemi,  Andeli\'c and  Simi\'c conjectured that no chain graph shares a non-zero (adjacency) eigenvalue with its vertex-deleted subgraphs.
We disprove this conjecture.
However, we show that the assertion holds for subgraphs obtained by deleting vertices of maximum degrees in either of color classes.
 We also give a simple proof for the fact that chain graphs have no eigenvalue in the interval $(0,1/2)$.

\vspace{5mm}
\noindent {\bf Keywords:}  Chain graph, Adjacency Matrix, Eigenvalue, Downer vertex  \\[.1cm]
\noindent {\bf AMS Mathematics Subject Classification\,(2010):}   05C50
\end{abstract}

\vspace{5mm}

\section{Introduction}

A graph is a called a {\em  chain graph} (or {\em double nested graph} \cite{bcrs}) if it is bipartite and the neighborhoods of the vertices in each color class form a chain with respect to inclusion. Chain graphs appear in different contexts and so several characterizations of them can be found in the literature.
Here we mention a few: a graph $G$ is a chain graph if and only if it satisfies one of the following properties:
\begin{itemize}
\item every vertex $v_i$ of $G$ can be assigned a real number $a_i$ for which there exists a positive real number $R$ such that $|a_i | < R $ for all $i$ and  two vertices $v_i,v_j$ are adjacent if and only if $|a_i -a_j|\ge R$ (due to this property chain graphs are also called {\em difference graphs}) \cite{hpx};
  \item $G$ is a bipartite graph  and every induced
subgraph with no isolated vertices has a dominating vertex on each color class, that is, a vertex adjacent to all the vertices of the other color class \cite{hpx};
\item $G$ is $(2K_2,C_5,C_3)$-free;
  \item $G$ is $2K_2$-free and bipartite;
  \item $G$ is $P_5$-free and bipartite.
\end{itemize}
Note that the last three characterizations follow easily from the second one.

In terms of graph eigenvalues, (connected) chain graphs have a remarkable feature. They are characterized as graphs whose largest eigenvalue  is maximum  among the connected bipartite graphs with the same number of vertices and edges (\cite{bcrs,bfp}). Another family  with similar properties as chain graphs are {\em threshold graphs} which are the graphs such that the neighborhoods of their vertices form a single chain with respect to inclusion. They have the largest maximum eigenvalue among the graphs with prescribed number of vertices and edges (see \cite[Remarks~8.1.9]{crs}).  In fact, any threshold graph can be obtained from a chain graph $G$ by replacing one color class of $G$ by a clique, and all other edges unchanged.  For more information see \cite{bls,mp}.

 Alazemi,  Andeli\'c and  Simi\'c \cite{aas} conjectured that no chain graph shares a non-zero (adjacency) eigenvalue with its vertex-deleted subgraphs.
We disprove this conjecture. However, we show that the assertion holds for subgraphs obtained by deleting vertices of maximum degrees in either of color classes.
They \cite{aas} also proved that chain graphs have no eigenvalue in the interval $(0,1/2)$. We give a simple proof for this result.

\section{Preliminaries}\label{pre}

%In this section we introduce notations and recall  a basic result  which will be used frequently.
The graphs we consider are all simple and undirected.
For a  graph $G$, we denote  by $V(G)$ the vertex set of $G$. %The {\em order} of $G$ is $|V(G)|$.
For two vertices $u,v$, by $u\sim v$ we mean that $u$ and $v$ are adjacent.
 If  $V(G)=\{v_1, \ldots , v_n\}$, then the {\em adjacency matrix} of $G$ is an $n \times  n$
 matrix $A(G)$ whose $(i, j)$-entry is $1$ if $v_i\sim v_j$ and  $0$ otherwise.
 By {\em eigenvalues}  of $G$ we mean those of $A(G)$.
 The multiplicity of an eigenvalue $\la$ of $G$ is denoted by $\mul(\la,G)$.
For a vertex $v$ of $G$, let $N(v)$ denote the {\em neighborhood} of $v$, i.e.   the set of
all vertices of $G$ adjacent to $v$.
Two vertices $u$ and $v$ of $G$ are called {\em duplicate} if $N(u)=N(v)$.
For $v\in V(G)$, we use the notation $G-v$ to mean the subgraph of $G$ induced by $V(G)\setminus \{v\}$.

\begin{remark}\label{struc} ({\em Structure of chain graphs}) As it was observed in \cite{bcrs}, the color classes of any chain graph $G$ can be partitioned into $k$ non-empty cells $U_1,\ldots, U_k$ and $V_1,\ldots, V_k$  such that $$N(u)=V_1\cup\cdots\cup V_{k-i+1}~~\hbox{for any}~ u\in U_i,~1\le i\le k.$$
\end{remark}

\begin{remark}\label{rem} ({\em Sum rule}) Let $\x$ be an eigenvector for eigenvalue $\la$ of a graph $G$. Then the entries of $\x$ satisfy the following equalities:
\begin{equation}\label{sumrule}
\la\x(v)=\sum_{u:\,u\sim v}\x(u),~~\hbox{for all}~v\in V(G).
\end{equation}
From this it is seen that if $\la\ne0$ and $N(v)=N(v')$, then $\x(v)=\x(v')$.
 In particular if $G$ is a chain graph, in the notations of Remark~\ref{struc}, $\x$ is constant on each $U_i$ and on each $V_i$ for $i=1,\ldots,k$.
\end{remark}

We will make use of the interlacing property of graph eigenvalues which we recall below (see \cite[Theorem~2.5.1]{bh}).
\begin{lem}\label{inter}
Let $G$ be a graph of order $n$, $H$ be an induced subgraph of $G$ of order $m$, $\la_1\ge\cdots\ge\la_n$ and $\mu_1\ge\cdots\ge\mu_m$ be the eigenvalues of $G$ and $H$, respectively. Then $$\la_i\ge\mu_i\ge\la_{n-m+i}~~\hbox{for}~ i=1, \ldots,m.$$
In particular, if $m=n-1$, then
 $$\la_1\ge\mu_1\ge\la_2\ge\mu_2\ge\cdots\ge\la_{n-1}\ge\mu_{n-1}\ge\la_n.$$
\end{lem}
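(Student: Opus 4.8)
The statement is the Cauchy interlacing theorem specialized to adjacency matrices, so the plan is to reduce it to the matrix version and invoke the variational characterization of eigenvalues. The crucial structural observation is that, because $H$ is an \emph{induced} subgraph of $G$, the matrix $A(H)$ is exactly the principal submatrix of $A(G)$ obtained by deleting the $n-m$ rows and the $n-m$ columns indexed by the vertices of $V(G)\setminus V(H)$. Hence it suffices to prove the following: if $B$ is an $m\times m$ principal submatrix of a real symmetric matrix $A$ of order $n$, then $\la_i\ge\mu_i\ge\la_{n-m+i}$ for $1\le i\le m$, where the $\la$'s and $\mu$'s are the eigenvalues of $A$ and $B$ in non-increasing order. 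I would first settle the single-deletion case $m=n-1$ (which is precisely the ``in particular'' part of the statement) and then obtain the general case by iterating the deletion $n-m$ times.

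The engine of the proof is the Courant--Fischer min--max characterization: for a real symmetric matrix $A$ of order $n$ with eigenvalues $\la_1\ge\cdots\ge\la_n$,
$$\la_i=\max_{\dim S=i}\ \min_{0\ne x\in S}\frac{x^{\top}Ax}{x^{\top}x}=\min_{\dim T=n-i+1}\ \max_{0\ne x\in T}\frac{x^{\top}Ax}{x^{\top}x},$$
with $S$ and $T$ ranging over subspaces of $\mathbb{R}^n$ of the prescribed dimension. For the single deletion I may assume $B$ arises by deleting the last row and column, and I identify $\mathbb{R}^{n-1}$ with the hyperplane $W=\{x\in\mathbb{R}^n:x_n=0\}$, on which the Rayleigh quotients of $A$ and $B$ coincide. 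The bound $\mu_i\le\la_i$ then follows from the max--min form, since $\mu_i$ maximizes the inner minimum over the $i$-dimensional subspaces lying inside $W$, a subfamily of all $i$-dimensional subspaces of $\mathbb{R}^n$. Dually, the bound $\mu_i\ge\la_{i+1}$ follows from the min--max form, because constraining the outer minimization to $(n-i)$-dimensional subspaces of $W$ can only raise the value. Together these yield $\la_{i+1}\le\mu_i\le\la_i$, which is the desired interlacing for $m=n-1$.

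Finally I would iterate. Writing $G=G_0\supset G_1\supset\cdots\supset G_{n-m}=H$, where each $G_{j+1}$ is obtained from $G_j$ by deleting a single further vertex, and letting $\la^{(j)}_1\ge\cdots\ge\la^{(j)}_{n-j}$ be the eigenvalues of $G_j$, the single-deletion case gives $\la^{(j)}_i\ge\la^{(j+1)}_i\ge\la^{(j)}_{i+1}$ at every step. Chaining the left inequalities gives $\mu_i=\la^{(n-m)}_i\le\la_i$, while chaining the right inequalities raises the subscript by one at each step and gives $\mu_i\ge\la^{(0)}_{i+(n-m)}=\la_{n-m+i}$. The one place deserving care is the index bookkeeping in this telescoping argument, together with checking that the indices stay in range, which reduces to the hypothesis $i\le m$; the variational characterization itself is classical and I would simply cite it, or prove it from the spectral theorem. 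No genuine obstacle arises beyond this.
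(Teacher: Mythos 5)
Your proof is correct. Note, however, that the paper itself offers no proof of this lemma at all: it is quoted verbatim from the literature, with a citation to Theorem~2.5.1 of Brouwer and Haemers, so there is no ``paper proof'' to match. Your argument is the classical Cauchy route: reduce to a principal submatrix, handle one deleted row and column via the Courant--Fischer max--min and min--max characterizations on the hyperplane $\{x : x_n=0\}$, and then telescope the single-deletion inequalities $\la^{(j)}_i\ge\la^{(j+1)}_i\ge\la^{(j)}_{i+1}$ to get $\la_i\ge\mu_i\ge\la_{n-m+i}$; the index bookkeeping you flag does check out, since $i\le m$ keeps every subscript in range. The proof in the cited reference is different in flavour: it proves interlacing in one stroke for any matrix of the form $S^{\top}AS$ with $S^{\top}S=I$ (principal submatrices being the special case where $S$ consists of standard basis vectors), via a dimension-count intersecting the span of the top $i$ eigenvectors of $B$ with the span of the last $n-i+1$ eigenvectors of $A$. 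That formulation is what the paper actually leans on, because the equality analysis in that proof is exactly what yields Lemma~\ref{eqinter} (an eigenvector of $H$ extending by zeros to an eigenvector of $G$ when $\la_i=\mu_i$ or $\mu_i=\la_{n-m+i}$); your iterated-deletion argument, while perfectly valid for the interlacing inequalities themselves, does not hand you that equality information as directly, so if you wanted to recover Lemma~\ref{eqinter} as well you would be better served by the one-shot eigenvector argument. As a proof of the stated lemma alone, yours is complete, elementary, and needs only the variational characterization, which you are right to treat as classical.
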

From the case of equality in interlacing (see \cite[Theorem~2.5.1]{bh}) the following can be deduced.
\begin{lem}\label{eqinter} If in Lemma~\ref{inter}, we have $\la_i=\mu_i$ or $\mu_i=\la_{n-m+i}$ for some $1\le i\le m$, then $A(H)$ has an eigenvector $\x$ for $\mu_i$, such that  $\begin{pmatrix}\bf0 \\ \x\end{pmatrix}$, with the $\bf0$ vector corresponding to $V(G)\setminus V(H)$, is an eigenvector of $A(G)$ for the eigenvalue $\mu_i$.
\end{lem}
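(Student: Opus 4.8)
The plan is to translate the statement into the language of the map that includes $H$ into $G$, and then to prove the equality case of interlacing directly from the Courant--Fischer min--max formulas. Order the vertices of $G$ so that those of $V(G)\setminus V(H)$ come first, and let $S$ be the $n\times m$ matrix whose columns are the standard basis vectors indexed by $V(H)$, so that $S\y=\begin{pmatrix}\mathbf0\\ \y\end{pmatrix}$ for every $\y\in\mathbb R^m$, with the $\mathbf0$ corresponding to $V(G)\setminus V(H)$. Because $H$ is an \emph{induced} subgraph we have $A(H)=S^{\top}A(G)\,S$ and $S^{\top}S=I_m$, and consequently $S$ preserves Rayleigh quotients:
$$\frac{\y^{\top}A(H)\,\y}{\y^{\top}\y}=\frac{(S\y)^{\top}A(G)(S\y)}{(S\y)^{\top}(S\y)}\qquad(\y\neq\mathbf0).$$
In these terms the asserted eigenvector is a $\mu_i$-eigenvector $\x$ of $A(H)$ for which $S\x$ is at the same time an eigenvector of $A(G)$; equivalently, $S\x$ lies in the $\mu_i$-eigenspace of $A(G)$.

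First I would treat the case $\la_i=\mu_i$. Fix orthonormal eigenvectors $\y_1,\dots,\y_m$ of $A(H)$ for $\mu_1\ge\cdots\ge\mu_m$ and orthonormal eigenvectors $u_1,\dots,u_n$ of $A(G)$ for $\la_1\ge\cdots\ge\la_n$, and let $R$ denote the Rayleigh quotient of $A(G)$. Put $W:=S\langle\y_1,\dots,\y_i\rangle$, an $i$-dimensional subspace of the column space of $S$ on which the identity above gives $\min_{0\ne x\in W}R(x)=\mu_i$, and $L:=\langle u_i,\dots,u_n\rangle$, a subspace of dimension $n-i+1$ on which $R(x)\le\la_i$ for all $x$.

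The crux is a dimension count: since $\dim W+\dim L=i+(n-i+1)=n+1>n$, the intersection $W\cap L$ contains a nonzero vector $z$. From $z\in W$ we get $R(z)\ge\mu_i=\la_i$, and from $z\in L$ we get $R(z)\le\la_i$, whence $R(z)=\la_i$. Expanding $z=\sum_{j\ge i}c_ju_j$ and comparing $R(z)=\la_i$ with the weighted average $\big(\sum_{j\ge i}c_j^2\la_j\big)/\big(\sum_{j\ge i}c_j^2\big)$ forces $c_j=0$ whenever $\la_j<\la_i$; hence $z$ is an eigenvector of $A(G)$ for $\la_i=\mu_i$. Since also $z\in W\subseteq\mathrm{col}(S)$, write $z=S\x$ with $\x\in\langle\y_1,\dots,\y_i\rangle$; then $A(G)S\x=\mu_iS\x$ together with $S^{\top}S=I_m$ yields $A(H)\x=S^{\top}A(G)S\x=\mu_i\x$. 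Thus $\x$ is a $\mu_i$-eigenvector of $H$ and $S\x=\begin{pmatrix}\mathbf0\\ \x\end{pmatrix}=z$ is an eigenvector of $G$ for $\mu_i$, as required.

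Finally, the case $\mu_i=\la_{n-m+i}$ reduces to the one just proved by applying it to $-A(G)$: negation reverses both ordered spectra, and under the reindexing $k=m-i+1$ the hypothesis $\mu_i=\la_{n-m+i}$ becomes exactly the equality between the $k$-th eigenvalue of $-A(H)$ and the $k$-th eigenvalue of $-A(G)$, so the previous argument returns the desired eigenvector. I expect the only genuinely delicate points to be the dimension count guaranteeing $W\cap L\ne\{0\}$ and the observation that a vector of $L$ attaining the maximal Rayleigh value $\la_i$ must be a true eigenvector; the remaining manipulations are the routine bookkeeping of passing between $A(H)$ and its image under $S$. This is precisely the equality clause of the interlacing theorem quoted as \cite[Theorem~2.5.1]{bh}, specialized to the inclusion $S$.
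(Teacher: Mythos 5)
Your argument is correct. Note, however, that the paper does not actually prove Lemma~\ref{eqinter}: it simply deduces it from the equality case of interlacing as stated in \cite[Theorem~2.5.1]{bh}. What you have done is reconstruct, from scratch, the standard proof of that cited equality clause, specialized to the inclusion matrix $S$ with $S^{\top}S=I_m$ and $A(H)=S^{\top}A(G)S$. Your version is sound in all the places that matter: the Rayleigh-quotient identity for vectors in the column space of $S$, the dimension count $\dim W+\dim L=n+1>n$ giving a nonzero $z\in W\cap L$, the observation that a vector of $\langle u_i,\dots,u_n\rangle$ attaining the extreme value $\la_i$ must lie in the $\la_i$-eigenspace (the coefficients on strictly smaller eigenvalues vanish), and the recovery $A(H)\x=S^{\top}A(G)S\x=\mu_i\x$ with $\x\ne\mathbf0$ because $S$ is injective; the reduction of the case $\mu_i=\la_{n-m+i}$ to the first case via $-A(G)$, $-A(H)$ and the reindexing $k=m-i+1$ is also checked correctly (the first-case argument only uses symmetry, not that the matrices are adjacency matrices). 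The trade-off is simply self-containedness versus brevity: the paper buys a one-line derivation by outsourcing the work to \cite{bh}, while your proof makes the lemma independent of that reference at the cost of repeating a known Courant--Fischer argument; your variant of intersecting $S\langle\y_1,\dots,\y_i\rangle$ with the lower eigenspace span $\langle u_i,\dots,u_n\rangle$ is a mild (and clean) rearrangement of the argument in \cite{bh}, which instead intersects with the orthogonal complement of $\langle S^{\top}u_1,\dots,S^{\top}u_{i-1}\rangle$.
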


\section{Eigenvectors and downer vertices}
For a graph $G$ and an eigenvalue $\la$ of $G$, a vertex $v$ is called {\em downer} if $\mul(\la,G-v)=\mul(\la,G)-1$.
In \cite{aacd} it was shown that all the non-zero eigenvalues of chain graphs are simple (this also readily follows from (the proof of) Theorem~\ref{chain} below).
As the subgraphs of any chain graph are also chain graphs, if  $\la$ is an eigenvalue of a chain graph $G$, then removal of any vertex from $G$ does not increase the multiplicity of $\la$, i.e. $\mul(\la, G-v)\le\mul(\la, G-v)=1$. A question raises on the precise value of $\mul(\la,G-v)$: is it always 0?
This was actually conjectured in \cite{aas}.

\begin{conj}\label{conj} {\rm (\cite{aas})} In any chain graph, every vertex is downer with respect to every non-zero eigenvalue.
\end{conj}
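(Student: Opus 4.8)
The plan is to prove Conjecture~\ref{conj} by reducing it to a single statement about eigenvectors and then verifying that statement from the cell structure of Remark~\ref{struc}. Fix a chain graph $G$, a non-zero eigenvalue $\la$, and a vertex $v$. Since all non-zero eigenvalues of chain graphs are simple, $\mul(\la,G)=1$; and since $G-v$ is again a chain graph, Lemma~\ref{inter} gives $\mul(\la,G-v)\le1$. Thus $v$ fails to be downer precisely when $\mul(\la,G-v)=1$, and by Lemma~\ref{eqinter} this occurs if and only if $G$ has an eigenvector $\x$ for $\la$ with $\x(v)=0$. Conversely, any eigenvector of $G$ for $\la\ne0$ that vanishes at $v$ restricts (by the sum rule, since the deleted coordinate contributes $0$) to a necessarily nonzero eigenvector of $G-v$. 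Hence the conjecture is equivalent to the single assertion: \emph{for every non-zero eigenvalue of a chain graph, the associated eigenvector has no zero entry.}

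Next I would exploit the cell decomposition. By Remark~\ref{rem} the eigenvector $\x$ is constant on each $U_i$ and each $V_j$; write $x_i$ and $y_j$ for these common values and $m_i=|U_i|$, $n_j=|V_j|>0$. Using $N(u)=V_1\cup\cdots\cup V_{k-i+1}$ for $u\in U_i$ and the dual description $N(v)=U_1\cup\cdots\cup U_{k-j+1}$ for $v\in V_j$, the sum rule~\eqref{sumrule} becomes
\begin{equation*}
\la x_i=\sum_{j=1}^{k-i+1}n_j y_j,\qquad \la y_j=\sum_{i=1}^{k-j+1}m_i x_i.
\end{equation*}
Subtracting consecutive equations telescopes these to the clean recurrences $\la(x_i-x_{i+1})=n_{k-i+1}\,y_{k-i+1}$ and $\la(y_j-y_{j+1})=m_{k-j+1}\,x_{k-j+1}$ for the interior indices, together with the boundary relations $\la x_k=n_1 y_1$ and $\la y_k=m_1 x_1$. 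Because $\la\ne0$ and the cell sizes are positive, these identities translate each vanishing entry into an equality between adjacent cell-values: for instance $y_1=0\iff x_k=0$ and $y_{k-i+1}=0\iff x_i=x_{i+1}$, with the symmetric statements for the $x_\ell$.

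With this dictionary in hand, the strategy is a propagation argument: assume some entry of $\x$ is zero and show the recurrences force either $\la=0$ or $\x=\mathbf0$, contradicting that $\x$ is an eigenvector for $\la\ne0$. Concretely I would start from an extremal zero entry (say the one of smallest index) and chase the equivalences above back and forth between the two color classes, attempting to collapse the entire chain of values to zero. The hard part --- and the crux of the whole problem --- is precisely this last step: the telescoped system does not by itself forbid a nontrivial configuration in which some interior difference $x_i-x_{i+1}$ (equivalently the corresponding $y_{k-i+1}$) vanishes while all remaining entries stay nonzero and consistent with~\eqref{sumrule}. To carry the proof through one must show such a configuration is incompatible with $\la$ being an eigenvalue, for example by tracking the sign changes of the sequences $(x_i)$ and $(y_j)$ forced by the recurrences and arguing that an interior zero over-determines the coupled system. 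I expect this sign/monotonicity analysis of the telescoped recurrence to be the decisive obstacle.
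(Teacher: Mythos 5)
The statement you are trying to prove is false, and the paper's whole point is to \emph{disprove} it. Your reduction is sound and matches the paper's Remark~\ref{downer}: since non-zero eigenvalues of chain graphs are simple, $v$ fails to be downer for $\la\ne0$ exactly when some eigenvector $\x$ for $\la$ has $\x(v)=0$. But the assertion you reduced to --- that eigenvectors of non-zero eigenvalues of chain graphs never vanish --- does not hold, so the ``decisive obstacle'' you flag at the end is not a technical difficulty to be overcome; it is an actual counterexample sitting in the telescoped system. The paper (Theorems~\ref{1} and \ref{om}) exhibits it explicitly on half graphs: for $H(k)$ with $k\equiv1\pmod6$, the vector $(\x~\x)$ built from the period-six pattern $(1,0,-1,-1,0,1)$ is an eigenvector for the eigenvalue $1$, and it has zero entries. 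You can check the smallest case $H(7)$ by hand with $\x=(1,0,-1,-1,0,1,1)$: vertex $u_i$ is adjacent to $v_1,\ldots,v_{8-i}$, and the partial sums of $\x$ are $1,1,0,-1,-1,0,1$, which read backwards reproduce $\x$ itself, verifying the sum rule with $\la=1$. So $u_2$, $u_5$ (and their mirrors) are non-downer vertices for the eigenvalue $1$, precisely the ``nontrivial configuration in which some interior difference vanishes while all remaining entries stay nonzero'' that you hoped to rule out. A second infinite family (Theorem~\ref{om}) does the same with $\la=\pm\om$, $\om^2+\om-1=0$, and a period-ten pattern, and duplicating a vertex carrying a zero entry extends these to infinitely many further counterexamples.

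What survives of your propagation argument is exactly the paper's Theorem~\ref{chain}: if you start the zero-chasing at a vertex of \emph{maximum} degree (i.e., in $U_1$ or $V_1$), the cascade $U_1\to V_k\to U_2\to V_{k-1}\to\cdots$ does force $\x=\bf0$, because at each stage the relevant neighborhood sums differ by a block already known to vanish. Your own derivation, specialized to $i=1$, is essentially that proof. The cascade genuinely stalls at interior cells --- as the half-graph eigenvectors demonstrate --- so the correct conclusion is the weaker one: only the maximum-degree vertices in each color class are guaranteed to be downer, and the conjecture as stated is refuted rather than proved.
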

The  conjecture is equivalent to say that for any chain graph $G$ and any $v\in V(G)$, $G-v$ shares no non-zero eigenvalue with $G$.

We disprove Conjecture~\ref{conj} in this section. Indeed, Theorems~\ref{1} and \ref{om} below show that there are infinitely many counterexamples for this conjecture. In spite of that, a weak version of the conjecture is true: in Theorem~\ref{chain} it will be shown that for non-zero eigenvalues the vertices with maximum degrees in each color class of a chain graph are downer.

\begin{remark}\label{downer}
For a vertex $v$ being downer or not depends on the component corresponding to $v$ in the eigenvectors of $\la$. Let $W$ be the eigenspace corresponding to $\la$. If for all $\x\in W$, we have $\x(v)=0$, then $v$ cannot be a downer vertex as for any $\x\in W$, the vector $\x'$
obtained by eliminating the the component corresponding to $v$, is an eigenvector of $\la$ for $G-v$, so we have
  $$\mul(\la,G-v)\ge\dim\,\{\x':\x\in W\}=\dim W=\mul(\la,G).$$
From this and Lemma~\ref{eqinter} it follows that,  in the case that $\mul(G,\la)=1$, there exists an eigenvector $\x$ for $\la$ with $\x(v)=0$ if and only if $v$ is not a downer vertex for $\la$.
\end{remark}

\begin{thm}\label{chain} Let $G$ be a chain graph. Then the vertices having maximum degrees in each color class of $G$ are downer  for any non-zero eigenvalue.
\end{thm}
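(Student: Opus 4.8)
The plan is to reduce the statement to a claim about a single eigenvector and then exploit the highly structured ``staircase'' neighbourhoods of a chain graph. Since the non-zero eigenvalues of a chain graph are simple, $\mul(\la,G)=1$ for every non-zero eigenvalue $\la$, so by Remark~\ref{downer} a vertex $v$ is downer for $\la$ if and only if an eigenvector $\x$ for $\la$ has $\x(v)\ne0$ (all such eigenvectors being proportional). Thus it suffices to prove: if $\la\ne0$ and $v$ has maximum degree in its color class, then $\x(v)\ne0$. By Remark~\ref{struc} the maximum-degree vertices of the two color classes are exactly the vertices of $U_1$ and of $V_1$, since their neighbourhoods $V_1\cup\cdots\cup V_k$ and $U_1\cup\cdots\cup U_k$ exhaust the opposite class. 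By Remark~\ref{rem}, $\x$ is constant on each cell; I write $x_i$ for its value on $U_i$ and $y_j$ for its value on $V_j$, and set $m_i=|U_i|$, $n_j=|V_j|$.

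First I would record the sum rule \eqref{sumrule} cell by cell. Using $N(u)=V_1\cup\cdots\cup V_{k-i+1}$ for $u\in U_i$ and the symmetric description $N(v)=U_1\cup\cdots\cup U_{k-j+1}$ for $v\in V_j$, the eigenvalue equations become
\begin{equation*}
\la x_i=\sum_{j=1}^{k-i+1} n_j\,y_j\quad(1\le i\le k),\qquad \la y_j=\sum_{i=1}^{k-j+1} m_i\,x_i\quad(1\le j\le k).
\end{equation*}
Two boundary instances are decisive: taking $i=1$ gives $\la x_1=\sum_{j=1}^{k} n_j y_j$, the full weighted sum over the $V$-side, while taking $j=k$ gives $\la y_k=m_1 x_1$.

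The heart of the argument is a cascade driven by the assumption $\x(v)=0$, i.e. $x_1=0$. From $\la y_k=m_1x_1$ and $\la\ne0$ we get $y_k=0$; substituting into $\la x_1=\sum_{j=1}^k n_jy_j$ and comparing with $\la x_2=\sum_{j=1}^{k-1} n_jy_j$ forces $x_2=0$; this in turn forces $y_{k-1}=0$ through the equation for $y_{k-1}$, and so on. Formally I would set up an interleaved induction, alternately using consecutive differences of the $x$-equations and of the $y$-equations, to conclude $x_1=\cdots=x_k=0$ and $y_1=\cdots=y_k=0$, i.e. $\x=\mathbf 0$, contradicting that $\x$ is an eigenvector. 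Hence $x_1\ne0$, so every vertex of $U_1$ is downer; by the complete $U\leftrightarrow V$ symmetry of the set-up, the same cascade started from $y_1=0$ shows every vertex of $V_1$ is downer.

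I expect the only delicate point to be the bookkeeping of the induction — checking that at each stage the newly vanished $y$-value is exactly the term needed to pass from the equation for $x_i$ to that for $x_{i+1}$ (and symmetrically), so that the two chains $x_1,\dots,x_k$ and $y_k,\dots,y_1$ zero out in lockstep and terminate at $x_k$ and $y_1$. No deeper obstacle arises, since $\la\ne0$ is all that is needed to divide at each step. I would also note in passing that the same vanishing argument, applied to the difference of two eigenvectors for a common non-zero $\la$, reproves the simplicity of the non-zero eigenvalues.
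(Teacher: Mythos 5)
Your proposal is correct and follows essentially the same route as the paper: assume the eigenvector vanishes on $U_1$ (respectively $V_1$) and run the alternating cascade $x_1=0\Rightarrow y_k=0\Rightarrow x_2=0\Rightarrow y_{k-1}=0\Rightarrow\cdots$, using the nested neighbourhoods and $\la\ne0$, to force $\x=\mathbf{0}$. The only cosmetic difference is that you write the sum rule cell-by-cell with weights $m_i,n_j$ while the paper argues vertex-by-vertex (and it notes, as you do, that the same argument yields simplicity of the non-zero eigenvalues rather than assuming it).
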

\begin{proof}{In the notations of Remark~\ref{struc}, the vertices in $U_1$ and $V_1$ have the maximum degree in color classes of $G$.
 We show that the vertices of $U_1$ and $V_1$  are downer with respect to any non-zero eigenvalue $\la$ of $G$.
We may assume that $G$ has no isolated vertices. Let $u_1\in U_1$, so $N(u_1)=V_1\cup \cdots\cup V_k$.
Let $\x$ be any eigenvector for $\la$. We claim that $\x(u_1)\ne0$, from which the result follows.
For a contradiction, assume that  $\x(u_1)=0$. So, $\x$ is zero on the whole $U_1$. For any $v\in V_k$,  $N(v)=U_1$,  so by the sum rule, $\x(v)=0$.
Hence for any $u_2\in U_2$,
$$0=\la\x(u_1)=\sum_{v\in N(u_1)}\x(v)=\sum_{v\in V_1\cup\cdots\cup V_k}\x(v)=\sum_{v\in V_1\cup\cdots\cup V_{k-1}}\x(v)=\sum_{v\in N(u_2)}\x(v)=\la\x(u_2).$$
It follows that $\x$ is zero on $U_2$ as well.
For any $v\in V_{k-1}$,  $N(v)=U_1\cup U_2$,  so again by the sum rule, $\x(v)=0$.
Hence for any $u_3\in U_3$,
$$0=\la\x(u_1)=\sum_{v\in V_1\cup\cdots\cup V_k}\x(v)=\sum_{v\in V_1\cup\cdots\cup V_{k-2}}\x(v)=\sum_{v\in N(u_3)}\x(v)=\la\x(u_3).$$
It follows that $\x$ is zero on $U_3$, too.
Continuing this argument, it follows that $\x=\bf0$, a contradiction.
}\end{proof}

A chain graph for which $|U_1|=\cdots=|U_k|=|V_1|=\cdots=|V_k|=1$ is called a {\em half graph}, where we denote it by $H(k)$.
As we will see in what follows, specific half graphs provide counterexamples to Conjecture~\ref{conj}.
Let
$$(a_1,\ldots,a_6):=(1,0,-1,-1,0,1).$$
Let $$\x:=(x_1,\ldots,x_k)~\hbox{where}~ x_i=a_s~\hbox{if}~i\equiv s\hspace{-0.25cm}\pmod6.$$ In the next theorem, we show that the vector $(\x~\x)$ (each $\x$ corresponds to a color class) is an eigenvector of a non-zero eigenvalue of $H(k)$ for some $k$. In view of Remark~\ref{downer}, this disproves Conjecture~\ref{conj} .

\begin{thm}\label{1} In any half graph $H(k)$, the vector $(\x~\x)$ is an eigenvector for eigenvalue $1$ if $k\equiv1\pmod6$ and it is an eigenvector for eigenvalue $-1$ if $k\equiv4\pmod6$.
\end{thm}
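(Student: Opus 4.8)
The plan is to reduce the claim to a finite verification via the sum rule. First I would record the adjacency structure of the half graph: labelling the single vertex of $U_i$ by $u_i$ and of $V_j$ by $v_j$, Remark~\ref{struc} gives $u_i\sim v_j$ precisely when $j\le k-i+1$, i.e. when $i+j\le k+1$. Thus the biadjacency matrix $B=(B_{ij})$ with $B_{ij}=1$ iff $i+j\le k+1$ is symmetric, and $(\x~\x)$ is an eigenvector of $A(H(k))$ for $\la$ if and only if $B\x=\la\x$. Writing out the $i$-th coordinate via the sum rule (Remark~\ref{rem}), this is exactly the condition
$$\la x_i=\sum_{j=1}^{k-i+1}x_j\qquad(1\le i\le k).$$

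Next I would introduce the partial sums $S_m=\sum_{j=1}^m x_j$ (with $S_0=0$), so the condition becomes $\la x_i=S_{k-i+1}$. The crucial observation is that one full period of $\x$ sums to $a_1+\cdots+a_6=1+0-1-1+0+1=0$, so $S_m$ depends only on $m\bmod 6$; a direct computation gives $S_m=0,1,1,0,-1,-1$ according as $m\equiv0,1,2,3,4,5\pmod6$.

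Then I would split into the two congruence classes. When $k\equiv1\pmod6$ we have $k-i+1\equiv2-i\pmod6$, and when $k\equiv4\pmod6$ we have $k-i+1\equiv5-i\pmod6$. In each case I would run through the six residues of $i\pmod6$, reading off $x_i$ from the pattern $(a_1,\ldots,a_6)$ and $S_{k-i+1}$ from the table above, and check that $S_{k-i+1}=x_i$ in the first case (giving $\la=1$) and $S_{k-i+1}=-x_i$ in the second (giving $\la=-1$). The entries where $x_i=0$ impose no constraint, and the remaining entries all yield the same value of $\la$, which is precisely what makes the two chosen residue classes of $k$ work.

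There is no genuine obstacle here, since the argument is ultimately a bookkeeping verification; the only points requiring care are the indexing modulo $6$ (in particular the case $i\equiv0$, where $x_i=a_6=1$) and the observation that, because $B$ is symmetric, one really does only need to verify the single family of equations above rather than checking both colour classes separately.
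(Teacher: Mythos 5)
Your proposal is correct and follows essentially the same route as the paper: verify the sum rule $\la x_i=\sum_{j=1}^{k-i+1}x_j$ using the fact that one period of $(a_1,\ldots,a_6)$ sums to zero, so the partial sums depend only on the index modulo $6$, and then finish by a finite check over the six residues (your $S_m$ table is just a reorganization of the paper's Table~1, and the symmetry reduction to one colour class appears in the paper as well).
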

\begin{proof}{
From Table~\ref{a_i}, we observe that for $1\le s\le6$,
$$\sum_{i=1}^{5-s}a_i=-a_s~~\hbox{and}~~\sum_{i=1}^{2-s}a_i=a_s,$$
where we consider $5-s$ and $2-s$ modulo 6 as elements of $\{1,\ldots,6\}.$
\begin{table}[ht]
$$\begin{array}{cccccc}
  \hline
   s&a_s & 5-s& \sum_{i=1}^{5-s}a_i  & 2-s&  \sum_{i=1}^{2-s}a_i  \\
   \hline
  1 & 1 & 4 & -1 & 1 & 1 \\
   2& 0 & 3 & 0 & 6 & 0 \\
  3 &-1 & 2 & 1 & 5 & -1 \\
  4 &-1 & 1 & 1 & 4 & -1 \\
  5 &0 & 6 & 0 & 3 & 0 \\
  6 & 1 & 5 & -1 & 2 & 1 \\
  \hline
\end{array}$$ \caption{The values of $\sum_{i=1}^{5-s}a_i$ and  $\sum_{i=1}^{2-s}a_i$}\label{a_i}
\end{table}

Note that, since $\sum_{i=1}^6a_i=0$, if $1\le\ell\le k$, $1\le s\le6$ and $\ell\equiv s\pmod6$, then
$$\sum_{i=1}^\ell x_i=\sum_{i=1}^s a_i.$$
Let $\{u_1,\ldots,u_k\}$ and $\{v_1,\ldots,v_k\}$ be the color classes of $H(k)$.
Let $k=6t+4$. We show that $(\x~\x)$ satisfies the sum rule with $\la=-1$.
By the symmetry, we only need to show this for $u_i$'s.
Let $i=6t'+s$ for some $1\le s\le6$. Then $n-i+1=6(t-t')+5-s$.
$$  \sum_{j:\,v_j\sim u_i}x_j=\sum_{j=1}^{n-i+1}x_j=\sum_{j=1}^{5-s}a_j=-a_s=-x_i.$$

Now, let $k=6t+1$. We show that in this case $(\x~\x)$ satisfies the sum rule with $\la=1$.
Let $i=6t'+s$ for some $1\le s\le6$. Then $n-i+1=6(t-t')+2-s$.
$$  \sum_{j:\,v_j\sim u_i}x_j=\sum_{j=1}^{n-i+1}x_j=\sum_{j=1}^{2-s}a_j=a_s=x_i.$$
}\end{proof}

Now we give another class of counterexamples to Conjecture~\ref{conj}. For this, let $$\om^2+\om-1=0,$$ and
$$(b_1,\ldots,b_{10}):=(\om,-1,0,1,-\om,-\om,1,0,-1,\om).$$
Let $$\x:=(x_1,\ldots,x_k)~\hbox{where}~ x_i=b_s~\hbox{if}~i\equiv s\hspace{-0.25cm}\pmod{10}.$$

\begin{thm}\label{om} In any half graph $H(k)$, the vector $(\x~\x)$ is an eigenvector for eigenvalue $\om$ if $k\equiv7\pmod{10}$ and it is an eigenvector for eigenvalue $-\om$ if $k\equiv2\pmod{10}$.
\end{thm}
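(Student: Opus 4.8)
The plan is to follow the template of the proof of Theorem~\ref{1} almost verbatim, replacing the period-$6$ sequence $(a_i)$ by the period-$10$ sequence $(b_i)$ and the relation $\la=\pm1$ by the defining relation $\om^2+\om-1=0$. First I would record what the sum rule demands. In the half graph $H(k)$ with color classes $\{u_1,\dots,u_k\}$ and $\{v_1,\dots,v_k\}$, Remark~\ref{struc} gives $u_i\sim v_j$ exactly when $i+j\le k+1$; since this condition is symmetric in $i$ and $j$ and the candidate eigenvector $(\x~\x)$ assigns the common value $x_i$ to both $u_i$ and $v_i$, it suffices by symmetry to verify the sum rule on the $u_i$'s, where it reads
$$\la\,x_i=\sum_{j=1}^{k-i+1}x_j.$$
Thus the whole statement reduces to showing, for the two residue classes of $k$, that the right-hand side equals $\om x_i$ (resp. $-\om x_i$). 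Note also that $x_1=b_1=\om\ne0$, so $(\x~\x)$ is genuinely nonzero.

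Next I would isolate the arithmetic facts about $(b_i)$. Because $\sum_{i=1}^{10}b_i=0$, a partial sum $\sum_{j=1}^{\ell}x_j$ depends only on $\ell\bmod 10$ and equals $S_s:=\sum_{i=1}^{s}b_i$, where $s\equiv\ell\pmod{10}$ is taken in $\{1,\dots,10\}$; this is the period-$10$ analogue of the reduction used for $(a_i)$. The core identity I would then establish is
$$\sum_{i=1}^{8-s}b_i=\om\,b_s\qquad(1\le s\le 10),$$
with the upper limit read modulo $10$ in $\{1,\dots,10\}$. This is a finite check in the style of Table~\ref{a_i}: tabulating $S_0,\dots,S_{10}$ and simplifying with $\om^2=1-\om$ (equivalently $\om-1=-\om^2$) matches each $S_{8-s}$ to $\om b_s$, the two lists agreeing term by term.

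A labour-saving observation gets the second case almost for free. The sequence is half-period antisymmetric, $b_{i+5}=-b_i$, whence $S_{m+5}=-S_m$ for every $m$ (the first five terms sum to $S_5=0$, and the next block is the negated copy). Since $8-s$ and $3-s$ differ by $5$, the companion identity
$$\sum_{i=1}^{3-s}b_i=-\om\,b_s\qquad(1\le s\le 10)$$
follows immediately from the displayed one. With both identities available the two cases are a one-line substitution. Writing $i=10t'+s$ with $1\le s\le10$: for $k=10t+7$ one has $k-i+1\equiv 8-s\pmod{10}$, so $\sum_{j=1}^{k-i+1}x_j=S_{8-s}=\om b_s=\om x_i$, yielding eigenvalue $\om$; for $k=10t+2$ one has $k-i+1\equiv 3-s\pmod{10}$, so the sum equals $-\om b_s=-\om x_i$, yielding eigenvalue $-\om$.

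The only genuine obstacle is bookkeeping rather than mathematics: one must pin down the correct offset ($8-s$ when $k\equiv7$, $3-s$ when $k\equiv2$) so that the eigenvalue emerges with the right sign, and must treat the boundary residues consistently (for instance $s=8$ in the first identity and $s=3$ in the second, where the upper limit collapses to $0\equiv10$ and $S_{10}=0$). Once the two partial-sum identities are verified, the sign assignment $k\equiv7\mapsto\om$ and $k\equiv2\mapsto-\om$ is forced.
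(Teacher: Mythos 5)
Your proposal is correct and follows essentially the same route as the paper: reduce the sum rule at $u_i$ to a partial sum of the $b_j$'s depending only on $k-i+1 \bmod 10$ (using $\sum_{i=1}^{10}b_i=0$), and verify the identities $\sum_{i=1}^{8-s}b_i=\om b_s$ and $\sum_{i=1}^{3-s}b_i=-\om b_s$, exactly the contents of the paper's Table~\ref{b_i}. The only (minor, and pleasant) difference is that you derive the second identity from the first via the half-period antisymmetry $b_{i+5}=-b_i$, hence $S_{m+5}=-S_m$, whereas the paper simply tabulates both columns; your version also makes explicit the boundary conventions and the nonvanishing $x_1=\om\ne0$, which the paper leaves implicit.
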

\begin{proof}{From Table~\ref{b_i}, we observe that for $1\le s\le10$,
$$\sum_{i=1}^{8-s}b_i=\om b_s~~\hbox{and}~~\sum_{i=1}^{3-s}b_i=-\om b_s,$$
where we consider $8-s$ and $3-s$ modulo 10 as elements of $\{1,\ldots,10\}$.
\begin{table}[ht]
$${\small\begin{array}{cccccc}
\hline
 s &b_s &8-s &\sum_{i=1}^{8-s}b_i & 3-s&\sum_{i=1}^{3-s}b_i \\ \hline
1 & \om&7 &1-\om &2 & \om-1\\
 2 & -1& 6&-\om &1  &\om\\
 3 &0 & 5& 0 &10 &0\\
 4 &1 &4  &\om &9 &-\om\\
 5 & -\om& 3&\om-1 & 8& 1-\om\\
 6  &-\om &2 & \om-1&7 & 1-\om\\
 7  &1  &1 &\om &6 &-\om\\
 8  &0 &10 &0 &5 &0\\
 9  &-1 & 9 &-\om &4 &\om\\
  10 &\om &8 & 1-\om&3  &\om-1 \\ \hline
\end{array}}$$\caption{The values of $\sum_{i=1}^{8-s}b_i$ and  $\sum_{i=1}^{3-s}b_i $}\label{b_i}
\end{table}

Note that, since $\sum_{i=1}^{10}b_i=0$, if $1\le\ell\le k$, $1\le s\le10$ and $\ell\equiv s\pmod{10}$, then
$$\sum_{i=1}^\ell x_i=\sum_{i=1}^s b_i.$$
Let $k=10t+7$. We show that $(\x~\x)$ satisfies the sum rule with $\la=\om$.
Let $i=10t'+s$ for some $1\le s\le10$. Then $n-i+1=10(t-t')+8-s$.
$$  \sum_{j:\,v_j\sim u_i}x_j=\sum_{j=0}^{n-i+1}x_j=\sum_{j=1}^{8-s}b_j=\om b_s=\om x_i.$$
Now, let $k=10t+2$. Assume that $i=10t'+s$ for some $1\le s\le10$. Then $n-i+1=6(t-t')+3-s$.
$$  \sum_{j:\,v_j\sim u_i}x_j=\sum_{j=1}^{n-i+1}x_j=\sum_{j=1}^{3-s}b_j=-\om b_s=-\om x_i.$$
It follows that in this case $(\x~\x)$ satisfies the sum rule with $\la=-\om$.
}\end{proof}

\begin{remark} (i) Given $(\x,\x)$ as eigenvector of $H(k)$ for $\la\in\{\pm1,\pm\om\}$, then $(\x,-\x)$ is an eigenvector of $H(k)$ for $-\la$.
This gives more eigenvalues of $H(k)$ with eigenvectors containing zero components.
  (ii) Let $\x$ be an eigenvector for eigenvalue $\la$ of a graph $G$ with $\x(v)=0$ for some vertex $v$. If we add a new vertex $u$ duplicate to $v$ and add a zero component to $\x$ corresponding to $u$, then the new vector is an eigenvector of $H$ for eigenvalue $\la$.
   So, we can extend any graph presented in Theorems~\ref{1} or \ref{om} to construct infinitely many more counterexamples for Conjecture~\ref{conj}.
\end{remark}

\section{An eigenvalue-free interval }

In \cite{aas}, it was proved that chain graphs have no eigenvalues in the interval $(0,1/2)$ (and hence no eigenvalue in the interval $(-1/2,0)$, as the eigenvalues of bipartite graphs are symmetric with respect to zero). Here we give a simple proof for this result.

\begin{thm} {\rm(\cite{aas})} Chain graphs have no eigenvalue in the interval $(0,1/2)$.
\end{thm}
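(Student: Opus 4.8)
The plan is to prove the sharper statement that every positive eigenvalue $\la$ of a chain graph satisfies $\la\ge1/2$; since the spectrum of a bipartite graph is symmetric about $0$, this is equivalent to the claim. First I would reduce to a connected chain graph with no isolated vertices (isolated vertices only contribute the eigenvalue $0$), so that the cell decomposition of Remark~\ref{struc} applies, and fix a nonzero eigenvector $\x$ for $\la$. By the sum rule (Remark~\ref{rem}), $\x$ is constant on each cell; write $p_i$ and $q_j$ for its values on $U_i$ and $V_j$, and set $m_i=|U_i|$, $n_j=|V_j|$. The two instances of the sum rule then read $\la p_i=\sum_{j=1}^{k+1-i}n_jq_j$ and $\la q_j=\sum_{i=1}^{k+1-j}m_ip_i$, the second coming from the symmetric description $N(v)=U_1\cup\cdots\cup U_{k+1-j}$ for $v\in V_j$.

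The engine of the proof is two identities. First, a discrete integration by parts: subtracting consecutive relations gives $\la(p_i-p_{i+1})=n_{k+1-i}q_{k+1-i}$ for $1\le i\le k$ (with the convention $p_{k+1}=0$), and squaring, dividing by $n_{k+1-i}$, and summing yields $\sum_j n_jq_j^2=\la^2\sum_{i=1}^{k}(p_i-p_{i+1})^2/n_{k+1-i}$. Second, an energy balance: computing $\la\sum_i m_ip_i^2$ as the double sum $\sum_{i+j\le k+1}m_in_jp_iq_j$, which is symmetric between the two color classes, shows $\sum_i m_ip_i^2=\sum_j n_jq_j^2$. Combining these, $\sum_i m_ip_i^2=\la^2\sum_{i=1}^k (p_i-p_{i+1})^2/n_{k+1-i}=:\la^2 R$, where $R>0$, since otherwise all $q_j$, and then via the sum rule all $p_i$, would vanish.

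It then remains to prove the elementary bound $R\le 4\sum_i m_ip_i^2$, after which feeding it into $\sum_i m_ip_i^2=\la^2R$ and cancelling the positive factor $\sum_i m_ip_i^2$ gives $1\le 4\la^2$, i.e. $\la\ge1/2$. Here the cell sizes only help: since $n_{k+1-i}\ge1$ and $m_i\ge1$, and $(p_i-p_{i+1})^2\le 2p_i^2+2p_{i+1}^2$, one telescopes (using $p_{k+1}=0$) to obtain $R\le\sum_i(p_i-p_{i+1})^2\le 4\sum_i p_i^2\le 4\sum_i m_ip_i^2$. The only real subtlety, and the part I expect to take the most care, is the bookkeeping that converts the sum rule into the difference identity and the energy balance; once these are in hand, the observation that duplicating vertices (that is, allowing $m_i,n_j\ge1$ rather than $=1$) can only enlarge $\sum m_ip_i^2$ and shrink $R$ reduces everything to the path-type inequality $(a-b)^2\le2a^2+2b^2$, whose constant $4$ is precisely what pins the threshold at $1/2$.
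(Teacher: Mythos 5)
Your proposal is correct, and it takes a genuinely different route from the paper's proof. All the steps check out: with the cell decomposition of Remark~\ref{struc} (valid once isolated vertices are discarded) and an eigenvector constant on cells, the difference identity $\la(p_i-p_{i+1})=n_{k+1-i}q_{k+1-i}$ (with $p_{k+1}=0$), the norm balance $\sum_i m_ip_i^2=\sum_j n_jq_j^2$ (the standard fact that for $\la\ne0$ the two halves of an eigenvector of a bipartite graph have equal norm), and the elementary bound $(a-b)^2\le 2a^2+2b^2$ together give $\sum_i m_ip_i^2=\la^2R\le 4\la^2\sum_i m_ip_i^2$ with $\sum_i m_ip_i^2>0$, hence $\la^2\ge 1/4$. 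The paper argues quite differently: by induction on the number of vertices, splitting into the case where $G$ has duplicate vertices (delete one, use interlacing plus bookkeeping of $\mul(0,G)=\mul(0,G-v)+1$ to push the positive spectrum above $1/2$) and the case where $G$ is a half graph, settled by the matrix identity $(2C-I)(2C-I)^\top=4CC^\top-I-2J$, which shows $4CC^\top-I$ is positive semidefinite. Your argument buys uniformity: no induction, no case analysis, no base-case check for small graphs, duplicates are absorbed painlessly through $m_i,n_j\ge1$, and it directly yields the sharper conclusion that every nonzero eigenvalue has absolute value at least $1/2$. The paper's half-graph identity, on the other hand, is extremely short for that special case and gives the extra information that half graphs have no eigenvalue in all of $(-1/2,1/2)$, i.e.\ they are nonsingular, which your estimate does not address since $\la=0$ is excluded from the outset.
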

\begin{proof}{The proof goes by induction on the number of vertices. The assertion holds for bipartite graphs with at most 4 vertices (see \cite[p.~17]{bh}). It suffices to consider connected graphs. So let $G$ be a connected chain graph with at least 5 vertices.

First assume that $G$  has a pair of duplicates $u,v$ and $H=G-v$.
Let $\la_1\ge\cdots\ge\la_\ell$ and $\mu_1\ge\cdots\ge\mu_{\ell-1}$ be the eigenvalues of $G$ and $H$, respectively.
Also suppose that $\mu_t>\mu_{t+1}=\cdots=\mu_{t+j}=0>\mu_{t+j+1}$ (with possibly $j=0$).
By the induction hypothesis, $\mu_t>1/2$ (the equality is impossible).
By interlacing, we have $\la_{t+1}\ge0=\la_{t+2}=\cdots=\la_{t+j}=0\ge\la_{t+j+1}\ge\mu_{t+j+1}$.
Note that $\mul(0,G)=\mul(0,H)+1=j+1$. This is  possible only if both $\la_{t+1}$ and $\la_{t+j+1}$ are zero.
On the other hand, again by interlacing, $\la_t\ge\mu_t>1/2$. Hence $G$ has no eigenvalue in $(0,1/2)$.

Now, suppose that $G$ has no pair of duplicates.
 It follows that $G$ is a half graph and
 $$A(G)=\begin{pmatrix}O&C\\C^\top&O\end{pmatrix},$$
 with $C+C^\top=J_n+I_n$ where  $J_n$ is the all 1's $n\times n$ matrix.
 We have that
$$(2C-I)(2C-I)^\top=4CC^\top-2C-2C^\top+I=4CC^\top-I-2J.$$
This means that $4CC^\top-I=(2C-I)(2C-I)^\top+2J$ is positive semidefinite and so the eigenvalues of $CC^\top$ are not smaller than $1/4$. It turns out that $G$ has no eigenvalue in the interval $(-1/2,1/2)$. This completes the proof.
}\end{proof}

\section*{Acknowledgments}
The research of the author was in part supported by a grant from IPM.

{}

\end{document}